\documentclass[10pt,a4paper,final]{article}

\usepackage[latin1]{inputenc}
\usepackage{amsmath}
\usepackage{amsfonts}
\usepackage{comment}
\usepackage[margin=1.2in]{geometry}
\usepackage{amssymb}
\usepackage{amsthm}
\usepackage{makeidx}

\usepackage{graphicx}
\usepackage{todo}
\usepackage{latexsym}
\usepackage{cite}
\usepackage{mathrsfs}
\usepackage{bbm}
\usepackage{bbold}
\usepackage{float}
\usepackage{epsfig}
\usepackage{epstopdf}
\usepackage{hyperref}
\usepackage{mathtools}
\usepackage{listings}
\usepackage{pgf}
\usepackage[T1]{fontenc}
\usepackage[latin1]{inputenc}
\usepackage[english]{babel}
\usepackage{pifont,tabularx}
\usepackage{stmaryrd}
\usepackage{dsfont}
\usepackage{theoremref}
\usepackage{nicefrac}
\usepackage{enumerate}
\usepackage{comment}

\usepackage{tikz}
\usepackage{pgfplots}
\usepackage{pgfplotstable}
\pgfplotsset{compat=1.7}
\usepackage{caption}
\usepackage{subfigure}
\usepackage{caption}

\theoremstyle{plain} 
\newtheorem{thm}{Theorem}[section]

\newtheorem{lem}[thm]{Lemma}

\newtheorem{obs}[thm]{Remark}

\title{Very large cliques in a scale-free random graph}
\author{
Carlo~De~Ambroggio\thanks{Universit\'a degli studi di Torino, Dipartimento di Matematica Giuseppe Peano. Email: carlo.deambroggio@unito.it}\and Umberto~De~Ambroggio\thanks{National University of Singapore, Department of Mathematics. Email: umbidea@gmail.com}
}

\begin{document}

\maketitle

\begin{abstract}
    In this short article we consider a preferential attachment random graph model with edge steps, studied by Alves, Ribeiro and Sanchis. Starting with an initial graph $\mathbb{G}_1$ formed by a vertex with a self-loop attached to it, the model evolves as follows. At every subsequent (discrete) time step, either with probability $p$ we add a vertex to the graph and connect it to exactly one of the older vertices selected with probability proportional to its degree, or with probability $1-p$ we add one edge between two existing vertices, both selected (independently) with probability proportional to their degrees. Let $\omega(\mathbb{G})$ be the clique number of a graph $\mathbb{G}$, i.e.\ the number of vertices in a largest complete subgraph of $\mathbb{G}_{}$. Alves, Ribeiro and Sanchis showed that, for any given $\varepsilon>0$, we have $\omega(\mathbb{G}_{2t})\geq t^{\frac{1-p}{2-p}(1-\varepsilon)}$ with high probability (i.e.\ with probability tending to $1$ as $t\rightarrow \infty$). Here we strengthen this bound by showing that, for any  function $f:\mathbb{N}\mapsto \mathbb{N}$ that satisfies $f(t)\rightarrow \infty$ as $t\rightarrow \infty$, with high probability
    \[\omega(\mathbb{G}_{2t}) = \Omega\left(t^{\frac{1-p}{2-p}}\Big(\log^{\frac{1}{2-p}}(t)f(t)\Big)^{-1}\right).\] 
\end{abstract}

\section{Introduction}
Since its introduction, the preferential attachment model has attracted considerable interest due to its ability to capture key features observed in real-world networks, see e.g.\  \cite{van2017random} for a comprehensive introduction.

In this article, we focus on a preferential attachment model with \textit{edge steps} studied by Alves, Ribeiro and Sanchis \cite{alves2021preferential} and subsequently analyzed in \cite{alves2017large} and \cite{alves2021clustering}. Models of this type have also been studied in other works. For example, in \cite{cooper2004random} the authors studied the degree sequence of a more general version of the model considered here, while in \cite{chung2004coupling} the authors analyzed topological properties such as diameter and typical distances for the model introduced in \cite{cooper2004random}.

The model studied in \cite{alves2021preferential} is defined as follows. Starting with an initial graph $\mathbb{G}_1$ consisting of a vertex with a self-loop attached to it, the model evolves as follows. At every time step $t\geq 2$, one of the following operations is performed:
\begin{itemize}
    \item With probability $p$, a new vertex $v$ is added to the graph and connected to an existing node $u$, chosen with probability proportional to its degree. That is, letting $\mathbb{G}_t$ denote the graph at time $t$, 
    \[
    \mathbb{P}(v \to u \mid \mathbb{G}_t) = \frac{\deg(u,t)}{\sum_{w \in \mathbb{G}_t} \deg(w,t)}.
    \]
    
    \item With probability $1-p$, an edge is added between two existing vertices which are selected independently with probability proportional to their degrees. (We note that we allow loops to be
added, and we also allow a new connection to be added between vertices that already
shared an edge.)
\end{itemize}
Alves, Ribeiro and Sanchis \cite{alves2021clustering} showed that, for any given $\varepsilon >0$, as $t\rightarrow \infty$
\begin{equation}\label{upperboundclique}
    \mathbb{P}\Big(t^{\frac{1-p}{2-p}(1-\varepsilon)}\leq \omega(\mathbb{G}_{2t})\leq t^{\frac{1-p}{2-p}}\log^{C}(t)\Big)=1-o(1),
\end{equation}
where $\omega(\mathbb{G}_{})$ is the clique number of a graph $\mathbb{G}$ (i.e.\ the number of vertices in the largest complete subgraph of $\mathbb{G}$) and $C>0$ is some constant.
The purpose of this short article is to improve the lower bound on $\omega(\mathbb{G}_{2t})$ by showing that, up to poly-logarithmic corrections, with high probability (i.e.\ with probability tending to $1$ as $t \to \infty$), the random graph $\mathbb{G}_{2t}$ contains a clique of order $t^{\frac{1-p}{2-p}}$. More precisely, we prove the following
\begin{thm}\label{mainthm}
     Let $f:\mathbb{N}\mapsto \mathbb{N}$ be a function such that $f(t)\rightarrow \infty$ as $t\rightarrow \infty$, arbitrarily slowly. Then, with high probability, 
    \[\omega(\mathbb{G}_{2t})=\Omega\left(t^{\frac{1-p}{2-p}}\Big(\log^{\frac{1}{2-p}}(t)f(t)\Big)^{-1}\right).\]
\end{thm}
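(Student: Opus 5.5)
Our strategy is a two-phase argument: grow the graph up to time $t$, then add the cliques' edges during the second half $(t,2t]$. We first identify a set of oldest vertices whose degrees at time $t$ are all large, and then use the edge steps in $(t,2t]$ to join every pair of them.

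\textbf{Phase 1: the set of high-degree vertices.} Write $\alpha=\frac{2-p}{2}$. The total degree is $2s$ at time $s$, so a fixed vertex $v$ born at time $s_v$ has degree growing like $(t/s_v)^{\alpha}$, up to a random multiplicative factor. Indeed, at each step $v$ gains in expectation $\deg(v,s)\frac{(2-p)}{2s}$. The quantity $\deg(v,s)\prod(1+\frac{2-p}{2r})^{-1}$ is then a martingale whose limit is a.s.\ positive.

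Let $k=k(t)$ be the target clique size, to be chosen later. Take $V_k$ to be the first $k$ vertices created, born by time about $k/p$. The main claim of this phase is that w.h.p.\ every $v\in V_k$ satisfies
\[\deg(v,t)\ge \delta_t\,(t/k)^{\alpha},\]
with $\delta_t\to 0$ slowly. We plan to prove this with a lower-tail bound on the degree of each vertex, for instance by comparing the degree process to a P\'olya urn or a Yule-type process after it reaches degree $\ge1$. The probability that the normalized degree is below $\delta$ should be $O(\delta)$. For the oldest vertices we would instead use a union bound with exponential tails, together with a cruder handling of the initial stretch.

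We expect the uniform lower bound over all $k$ vertices to be the main obstacle. The vertices born around time $k$ have only a constant-order chance of being small, so we cannot ask all of them to exceed $(t/k)^\alpha$. We therefore plan to keep only those vertices whose degree is at least a fraction of the typical value; by Markov or Chebyshev, a constant proportion survive w.h.p. Degrees only increase, so the bound at time $t$ persists throughout $(t,2t]$.

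\textbf{Phase 2: adding the clique edges.} Condition on $\mathbb G_t$ and on a set $S$ of $\asymp k$ vertices, each of degree at least $D=c(t/k)^\alpha/f(t)^{1/2}$. At each edge step in $(t,2t]$ the total degree is at most $4t$. For a fixed pair $u,w\in S$, the probability that the step joins them is therefore at least $2D^2/(16t^2)$. Since there are $\sim(1-p)t$ independent edge steps,
\[\mathbb P(uw\notin \mathbb G_{2t})\le \exp\big(-c' D^2/t\big).\]
A union bound over the $k^2$ pairs reduces the problem to requiring $D^2/t\ge C\log t$, that is,
\[t^{2\alpha-1}k^{-2\alpha}\gtrsim \log t\cdot f(t).\]
Since $2\alpha-1=1-p$ and $2\alpha=2-p$, this gives
\[k\approx t^{\frac{1-p}{2-p}}(\log t\, f(t))^{-\frac1{2-p}},\]
which is at least the bound stated in Theorem~\ref{mainthm} after reparametrising $f$.

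In summary, Phase 1 supplies $\asymp k$ vertices of degree at least $D$ by time $t$, and Phase 2 joins all pairs among them w.h.p. Together these give a clique of size $\Omega(k)$ in $\mathbb G_{2t}$, which is the bound claimed in Theorem~\ref{mainthm}.
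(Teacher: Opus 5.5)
Your outline follows the paper's route. You take early-born vertices, prove a lower-tail bound on their degree at time $t$, use Markov to keep most of them, and then take a union bound over pairs using the edge steps in $(t,2t]$. Phase 2 and the exponent bookkeeping are correct. With your choice of $k$ one gets $D\asymp t^{1/2}\log^{1/2}t$, which is exactly the paper's threshold $h$. Your first stated claim, that \emph{every} $v\in V_k$ clears $\delta_t(t/k)^\alpha$, is false for slowly decaying $\delta_t$, as you yourself then note; only the survivor version is usable. The genuine gap is that Phase 1, where essentially all of the paper's work goes, is only asserted (``should be $O(\delta)$'', ``we plan to'').

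Concretely, you need $\sup_b\mathbb{P}(\deg(v,t)<D\mid b(v)=b)=o(1)$, uniformly over birth times $b$ from some $T_1\to\infty$ up to $O(k)$. Markov then leaves $o(k)$ failures w.h.p. This works only because $D$ is a vanishing fraction, at most of order $f^{-1/2}$, of $(t/b)^\alpha$. With a fixed fraction, Markov gives only a constant-probability statement, and Chebyshev would need covariance bounds you do not address.

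The a.s.\ positivity of the martingale limit concerns one fixed vertex, so it gives nothing uniform as $b\to\infty$ with $t$. The Yule/P\'olya comparison is not immediate either. The probability that $\deg(v,\cdot)$ increases at step $s$ is $(1-\frac p2)\frac{\deg}{s}-(1-p)\frac{\deg^2}{4s^2}$, since self-loops give jumps of $2$. So you cannot directly compare with a Yule process of rate $\alpha\deg$. Moreover $\log(t/b)\asymp\log t$, so a uniform relative rate deficit $\varepsilon$ costs a factor $(t/b)^{-\alpha\varepsilon}$ in the degree. Hence $\varepsilon=o(1)$ is not enough; you need $\varepsilon\log t=O(1)$, which requires upper-tail control of $\deg(v,s)/s$ along the trajectory.

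The paper supplies this control as follows.
\begin{enumerate}
\item The Azuma bound of Lemma~\ref{upperbdeg} shows the degree does not reach $r$ before $g(r)\approx r^{2/(2-p)}b/\omega$. This gives $\delta_p^+=O(\omega/b)=O(1/\log t)$.
\item Achieving this while keeping $h\,e^{-C_3\omega^{2-p}/b}=o(1)$ is exactly why the paper discards vertices born before $T_1\gg\log^M t$.
\item It then dominates $T^u_{r+1}$ by $T^u_r e^{\eta_r}$, with independent $\eta_r\sim\mathrm{Exp}((1-\frac p2)r(1-\delta_p(r)))$.
\item An exponential-moment Markov bound then gives failure probability $O(f(t)^{-\lambda})$.
\end{enumerate}
Supplying an argument of this kind would complete your proof. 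Your special treatment of the oldest vertices is unnecessary; you can simply discard them.
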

\begin{obs}
    We remark that Theorem \ref{mainthm} could be stated for $\mathbb{G}_{t}$; however, in this case, we would need to replace $t$ with $t/2$ throughout the proof, which is annoying. This is why we preferred (as in \cite{alves2021clustering}) to state the result for the random graph at time $2t$.
\end{obs}
\begin{obs}
Note that Theorem \ref{mainthm} together with the upper bound for $\omega(\mathbb{G}_{2t})$ displayed in (\ref{upperboundclique}) establishes that
\[\Big|\frac{\log(\omega(\mathbb{G}_t))}{\log(t)}- \frac{1-p}{2-p}\Big| =\Theta\Big(\frac{\log\log(t)}{\log(t)}\Big)\]
with high probability. It would be interesting to establish tight (up to constant factors) bounds for the clique number of this model. In particular, we can ask: is there a constant $b=b(p)\in \mathbb{R}$ such that
\[A^{-1}\log^{b}(t)t^{\frac{1-p}{2-p}}\leq \omega(\mathbb{G}_{2t})\leq A\log^{b}(t)t^{\frac{1-p}{2-p}}\]
with probability at least $1-1/g(A)$, where $g(A)\rightarrow \infty$ as $A\rightarrow \infty$?
\end{obs}

\paragraph{Notation.} We list here the notation used throughout the article. We denote by $\mathbb{N}$ the set of positive integers. Given $n\in \mathbb{N}$, we let $[n]\coloneqq \{1,2,\dots,n\}$. We use $\log(\cdot)$ for the natural logarithm and write $\log^\alpha(x)$ instead of the more precise $(\log(x))^{\alpha}$.
For two functions $f,g: \mathbb{R}_+ \to \mathbb{R}_+$ we write $f(x) = o(g(x))$ or $f(x) \ll g(x)$ to indicate that $ f(x)/g(x) \to 0$ as $ x \to \infty$; $f(x)=\Omega(g(x))$ if there exists a constant $c>0$ such that $f(x)\ge c g(x)$ as $x\to\infty$; and $f(x)=O(g(x))$ if there exists a constant $c>0$ such that $f(x)\le c g(x)$ as $x \to \infty$. We write $f(c)=\Theta(g(x))$ if $f(x)=O(g(x))$ and $g(x)=O(f(x))$. 
Given a graph $\mathbb{G}$, we write $V(\mathbb{G})$ and $E(\mathbb{G})$ to denote the set of vertices and edges of $\mathbb{G}$, respectively. Given a set $S$, we write $|S|$ to indicate its cardinality. 
 Given two random variables $X$ and $Y$, we write $X =_d Y$ if $X$ is a random variable distributed according to the distribution $Y$.
 We write $\text{Bin}(n,p)$ to indicate the binomial distribution with parameters $n,p$, while we use $\text{Exp}(\lambda)$ to indicate the exponential distribution with parameter $\lambda$. 
We write $\exp{(x)}$ to indicate the exponential function $e^{x}$. 
We write $\lceil x \rceil$ to indicate $\inf_{n \in \mathbb{N}}\{n : n \geq x\}$ and  $\lfloor x \rfloor$ to indicate $\sup_{n \in \mathbb{N}}\{n: n \leq x\}$.
Finally, we write $a \vee b$ to indicate the maximum between $a$ and $b$.

\subsection{Some known estimates}
Here we collect two basic estimates which will be needed later on. The first one is a classical concentration inequality for the binomial distribution, whereas the second one is the well-known Azuma-Hoeffding inequality for martingales with bounded increments evolving in discrete time.
\begin{lem}[Chernoff bound]\label{concbinom}
    Let $X\sim\mathrm{Bin}(n,p)$, and let $\mu = np$ . Then
\[
\mathbb{P}\left(X \le \frac{\mu}{2}\right) \le \exp\left(-\frac{\mu}{8}\right) .
\]
\end{lem}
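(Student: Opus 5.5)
The plan is the standard exponential-moment (Chernoff) argument, which I think is what is intended here. Write $X=\sum_{i=1}^n \xi_i$ with $\xi_i$ i.i.d.\ Bernoulli$(p)$. For any $\lambda>0$, Markov's inequality applied to $e^{-\lambda X}$ gives
\[
\mathbb{P}\left(X\le \tfrac{\mu}{2}\right)=\mathbb{P}\left(e^{-\lambda X}\ge e^{-\lambda\mu/2}\right)\le e^{\lambda\mu/2}\,\mathbb{E}\left[e^{-\lambda X}\right]=e^{\lambda\mu/2}\bigl(1-p+pe^{-\lambda}\bigr)^n .
\]
Using $1+x\le e^x$ with $x=p(e^{-\lambda}-1)$, we get $\mathbb{E}[e^{-\lambda X}]\le \exp\bigl(\mu(e^{-\lambda}-1)\bigr)$. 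Hence
\[
\mathbb{P}\left(X\le \mu/2\right)\le \exp\Bigl(\mu\bigl(e^{-\lambda}-1+\lambda/2\bigr)\Bigr).
\]

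Next I would optimize over $\lambda$. The exponent $\lambda\mapsto e^{-\lambda}-1+\lambda/2$ is minimized at $\lambda=\log 2$, where it equals $\tfrac12-1+\tfrac{\log 2}{2}=-\tfrac{1-\log 2}{2}\approx -0.1534$. The resulting bound is
\[
\mathbb{P}(X\le\mu/2)\le \exp\Bigl(-\mu\,\tfrac{1-\log2}{2}\Bigr),
\]
which is in fact the general form $\exp(-\mu(\delta+(1-\delta)\log(1-\delta)))$ with $\delta=1/2$.

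It remains to check the numerical inequality $\tfrac{1-\log 2}{2}\ge \tfrac18$. This is equivalent to $\log 2\le \tfrac34$, which holds because $\log 2<0.6932$. The claimed estimate $\exp(-\mu/8)$ follows.

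An alternative route to the same constant is the cruder bound $\delta+(1-\delta)\log(1-\delta)\ge \delta^2/2$, valid for $\delta\in(0,1)$ by comparing Taylor series, all of whose terms are nonnegative. With $\delta=1/2$ this gives exactly $1/8$. There is no real obstacle here. The only points needing care are the sign of $\lambda$ in the lower-tail Markov step and verifying the final numerical constant; both are routine.
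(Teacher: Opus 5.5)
Your proof is correct. The paper gives no proof of this lemma and simply quotes it as a classical Chernoff bound, so your argument supplies the standard derivation it relies on: Markov applied to $e^{-\lambda X}$, then $1+x\le e^x$, then the choice $\lambda=\log 2$, which yields the exponent $(1-\log 2)/2\ge 1/8$.
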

\begin{lem}\label{concmg}
    Let $(X_t:t\in \mathbb{N}_0)$ be a martingale satisfying $|X_i-X_{i-1}|\leq b_i$ for each $1\leq i\leq n$. Then, given any $x>0$, we have that
    \[\mathbb{P}(X_n-X_0>x)\leq \exp\Big(-\frac{x^2}{2\sum_{i=1}^nb^2_i}\Big).\]
\end{lem}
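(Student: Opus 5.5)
The plan is the standard exponential-moment (Chernoff-type) argument. Write $D_i\coloneqq X_i-X_{i-1}$ for $1\leq i\leq n$, so that $X_n-X_0=\sum_{i=1}^n D_i$. Let $\mathcal{F}_i$ be the filtration with respect to which $(X_t)$ is a martingale; the natural filtration $\sigma(X_0,\dots,X_i)$ suffices. Then each $D_i$ is $\mathcal{F}_i$-measurable and satisfies $\mathbb{E}[D_i\mid \mathcal{F}_{i-1}]=0$ and $|D_i|\leq b_i$. For any $\lambda>0$, Markov's inequality applied to $e^{\lambda(X_n-X_0)}$ gives
\[\mathbb{P}(X_n-X_0>x)\leq e^{-\lambda x}\,\mathbb{E}\big[e^{\lambda(X_n-X_0)}\big].\]
The whole task therefore reduces to the bound $\mathbb{E}[e^{\lambda(X_n-X_0)}]\leq \exp\big(\tfrac{\lambda^2}{2}\sum_{i=1}^n b_i^2\big)$, followed by an optimisation over $\lambda$.

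The key step, and the only real obstacle, is the conditional Hoeffding lemma: if $|D|\leq b$ and $\mathbb{E}[D\mid\mathcal{G}]=0$, then $\mathbb{E}[e^{\lambda D}\mid \mathcal{G}]\leq e^{\lambda^2b^2/2}$ almost surely. I would prove it by convexity of $y\mapsto e^{\lambda y}$ on $[-b,b]$. Writing $y$ as a convex combination of the endpoints, one has for every $y\in[-b,b]$
\[e^{\lambda y}\leq \frac{b-y}{2b}e^{-\lambda b}+\frac{b+y}{2b}e^{\lambda b}.\]
Substituting $y=D$, taking conditional expectations and using $\mathbb{E}[D\mid\mathcal{G}]=0$ yields $\mathbb{E}[e^{\lambda D}\mid\mathcal{G}]\leq \cosh(\lambda b)$. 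The elementary inequality $\cosh(u)\leq e^{u^2/2}$ then finishes the lemma; it follows by comparing the Taylor series term by term, since $(2k)!\geq 2^k k!$. If $b_i=0$ for some $i$, then $D_i=0$ and the bound is trivial.

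With the lemma in hand, I would peel off one increment at a time using the tower property. Since $e^{\lambda(X_{n-1}-X_0)}$ is $\mathcal{F}_{n-1}$-measurable,
\[\mathbb{E}\big[e^{\lambda(X_n-X_0)}\big]=\mathbb{E}\Big[e^{\lambda(X_{n-1}-X_0)}\,\mathbb{E}\big[e^{\lambda D_n}\mid\mathcal{F}_{n-1}\big]\Big]\leq e^{\lambda^2 b_n^2/2}\,\mathbb{E}\big[e^{\lambda(X_{n-1}-X_0)}\big].\]
Induction on $n$ then gives $\mathbb{E}[e^{\lambda(X_n-X_0)}]\leq \exp\big(\tfrac{\lambda^2}{2}\sum_{i=1}^n b_i^2\big)$; all expectations involved are finite because the increments are bounded. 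Combining this with the Markov step gives
\[\mathbb{P}(X_n-X_0>x)\leq \exp\Big(-\lambda x+\frac{\lambda^2}{2}\sum_{i=1}^nb_i^2\Big).\]
Finally, assuming $\sum_i b_i^2>0$ (otherwise $X_n=X_0$ almost surely and the probability is $0$), I would choose $\lambda=x/\sum_{i=1}^n b_i^2$. This minimises the exponent and gives exactly $\exp\big(-x^2/(2\sum_{i=1}^n b_i^2)\big)$.
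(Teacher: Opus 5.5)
Your proof is correct and complete. It is the standard argument: the conditional Hoeffding lemma via convexity and $\cosh u\le e^{u^2/2}$, then peeling off increments with the tower property, then choosing $\lambda=x/\sum_i b_i^2$. The paper gives no proof of this lemma, citing it as the well-known Azuma--Hoeffding inequality, so your argument supplies exactly the classical proof that the citation refers to.
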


\subsection{Heuristic derivation of Theorem \ref{mainthm}}
To prove our result we restrict the search of a clique of the desired size
\[t^{\frac{1-p}{2-p}}\Big(\log^{\frac{1}{2-p}}(t)f(t)\Big)^{-1}\]
among vertices that are `sufficiently old' so that their degrees are guaranteed to be `large enough' at time $2t$. More specifically, given a large enough constant $M>1$, we restrict our attention to vertices of $\mathbb{G}_{2t}$
that are born between times
\[T_1\coloneqq \log^M{t} \quad \text{and}\quad T_2\coloneqq t^{\frac{1-p}{2-p}}\Big(\log^{\frac{1}{2-p}}(t)f(t)\Big)^{-1}\ll t\]
whose degree is at least $Ct^{1/2}\log^{1/2}{t}$ for a large enough constant $C>0$, and show that such vertices are likely to connect one another during the time interval $[2t]\setminus[t]$. This is indeed the case because, by a union bound, the probability that these $\approx T_2$ `high degree' vertices do not form a clique is at most
\[T_2^2\prod_{s=t+1}^{2t}\Big(1-c_p \frac{C^2t\log(t)}{s^2}\Big)\]
for some constant $c_p>0$ (we recall that, during an edge step, we add one edge between two vertices selected independently with probability proportional to their degrees). But, since $1+x\leq e^x$ for every real $x$, we get
\[\prod_{s=t+1}^{2t}\Big(1-c_p \frac{C^2t\log(t)}{s^2}\Big)\lesssim \exp\Big(-c_p C^2t\log(t) \int_{t+1}^{2t}s^{-2}ds\Big)\approx \exp\big(-c_p C^2\log(t) \big)\]
and so, by taking a large enough $C$, we see that indeed the vertices of interest (i.e.\ those nodes of $\mathbb{G}_{2t}$ born in the time window $[T_1,T_2]$ with degree at least $Ct^{1/2}\log^{1/2}(t)$) form a clique of (the desired) size $\approx T_2$ with probability $1-o(1)$. Hence there remains to show that indeed a positive proportion of the vertices of $\mathbb{G}_{2t}$ born in the time interval $[T_1,T_2]$ have degree at least $Ct^{1/2}\log^{1/2}(t)$. To do so, let $X_t$ be the number of vertices in $\mathbb{G}_{2t}$ born in $[T_1,T_2]$ whose degree is less than $Ct^{1/2}\log^{1/2}(t)$. Then we must show that, for some constant $\delta>0$, $X_t\leq \delta T_2$ with high probability. Denoting by $\mathcal{L}_t$ the set of vertices of $\mathbb{G}_{2t}$ born in $[T_1,T_2]$, we can write
\begin{align*}
    \mathbb{E}[X_t]&=\sum_{u\in [2t]}^{}\mathbb{P}\big(\text{deg}(u,t)<Ct^{1/2}\log^{1/2}(t),u\in \mathcal{L}_t\big)\\
&=\sum_{u\in [2t]}^{}\mathbb{E}\Big[\mathbb{1}\{u\in \mathcal{L}_t\}\mathbb{P}\big(\text{deg}(u,t)<Ct^{1/2}\log^{1/2}(t)\mid b(u)\big)\Big],
\end{align*}
where $b(u)$ represents the \textit{birth time} of vertex $u\in \mathbb{G}_{2t}$. Suppose that, uniformly over all $u\in \mathcal{L}_t$, as $t\rightarrow \infty$ it holds that 
\begin{equation}\label{CCC}
    \mathbb{P}\big(\text{deg}(u,t)<Ct^{1/2}\log^{1/2}(t)\mid b(u)\big)=o(1).
\end{equation}
Then, since $|\mathcal{L}_t|\leq T_2$, we deduce that
\[\mathbb{E}[X_t]=o(T_2).\]
But then, by Markov's inequality, we get
\[\mathbb{P}(X_t>\delta T_2)\leq \frac{\mathbb{E}[X_t]}{\delta T_2}=o(1),\]
as desired. Hence there remains to establish (\ref{CCC}).
In order to show that the probability in (\ref{CCC}) is $o(1)$ (uniformly over the vertices under exam), we closely follow \cite{alves2021clustering} and analyze the time required by vertex $u$ to reach degree $k$. We do not give the details here, but a simple observation illustrates why the above should be true. Indeed, we observe that
\[\mathbb{E}[\text{deg}(u,t)\mid b(u)]\approx \Big(\frac{t}{b(u)}\Big)^{1-p/2};\]
also 
\[\Big(\frac{t}{b(u)}\Big)^{1-p/2}\gg t^{1/2}\log^{1/2}(t) \Longleftrightarrow b(u)\ll t^{\frac{1-p}{2-p}}/\log^{\frac{1}{2-p}}(t)\]
and the last statement holds because $b(u)\leq T_2$ for every $u\in \mathcal{L}_t$ and (by definition of $T_2$),
\[\frac{T_2}{t^{\frac{1-p}{2-p}}/\log^{\frac{1}{2-p}}(t)}=1/f(t)\ll 1.\]
%Concerning the choice of $T_1$, has been properly made in order to prove this last result. In particular, taking vertices born after $\log^M{(t)}$, let us have control on the degree growth, which turned out to be fundamental in order to obtain our result.

\section{Proof of Theorem \ref{mainthm}}
Define
\[L(t)=L(t,p,\varepsilon)\coloneqq \frac{t^{\frac{1-p}{2-p}}}{\log^{\frac{1}{2-p}}(t)f(t)},\]
where we recall that $f(t) \to \infty$ as $t \to \infty$.
In order to establish Theorem \ref{mainthm} it is sufficient to show that
\begin{equation}\label{goal}
    \mathbb{P}\left(\exists \mathcal{S}\subseteq V(\mathbb{G}_{2t}): |\mathcal{S}|\geq \frac{L(t)}{4} \text{ and }\{u,v\}\in E(\mathbb{G}_{2t}) \text{ }\forall u,v\in \mathcal{S} \text{ with }u\neq v\right)=1-o(1).
\end{equation}
For each vertex $u\in V(\mathbb{G}_{2t}) \subseteq [2t]$, let us write $b(u)$ for its (random) birth time. (Note that $b(u)\geq u$ for each vertex $u \in V(\mathbb{G}_{2t})$ since it is not guaranteed that a vertex is added at each step of the algorithm constructing $\mathbb{G}_{2t}$.) 
Let us denote by $\mathcal{L}_t$ the (random) subset of vertices of $\mathbb{G}_{2t}$ which are born between times $T_1=T_1(t,p)\in \mathbb{N}$ and $T_2=T_2(t,p)\in \mathbb{N}$, where (given any $M>0$)
\[\log^M(t)\ll T_1\leq \frac{T_2}{2}, \quad \text{ } 
T_2\coloneqq 2\lceil p^{-1} L(t)\rceil.\]
That is, 
\[\mathcal{L}_t\coloneqq \{u\in V(\mathbb{G}_{2t}):b(u)\in [T_1,T_2]\cap \mathbb{N}\}\subseteq V(\mathbb{G}_{2t}).\]
Note that, since we add at most one vertex at each step in the construction of the underlying random graph, we have that $|\mathcal{L}_t|\leq T_2-T_1$ deterministically. The probability in (\ref{goal}) is clearly at least
\begin{equation}\label{interstep}
    \mathbb{P}\left(\exists \mathcal{L}'_t\subseteq \mathcal{L}_t: |\mathcal{L}'_t|\geq \frac{|\mathcal{L}_t|}{2} \text{ and }\{u,v\}\in E(\mathbb{G}_{2t}) \text{ }\forall u,v\in \mathcal{L}'_t \text{ with }u\neq v, |\mathcal{L}_t|\geq \frac{L(t)}{2}\right).
\end{equation}
We define the counting random variable
\[X_t\coloneqq \sum_{u\in \mathcal{L}_t}^{}\mathbb{1}\{\text{deg}(u,t)<C_2t^{1/2}\log^{1/2}(t)\}\]
where $C_2=C_2(p)$ is some (large) constant to be selected later. In words, $X_t$ is just the number of vertices in $\mathcal{L}_t\subseteq \mathbb{G}_{2t}$ whose degree at time $t$ is strictly less than
\[C_2t^{1/2}\log^{1/2}(t)\eqqcolon h(t,C_2)=h.\]
On the event $\{X_t\leq |\mathcal{L}_t|/2-1\}$, there are at least $|\mathcal{L}_t|-X_t \geq |\mathcal{L}_t|/2+1 \geq \lceil |\mathcal{L}_t|/2\rceil$ vertices in $\mathcal{L}_t$ having degree at least $h$ at time $t$. Let us denote by $U_i$ the $i$-th (for $1\leq i\leq \lceil |\mathcal{L}_t|/2\rceil$) smallest vertex label in $\mathcal{L}_t$ with degree at least $h(t)$. That is,
\[U_1\coloneqq \min\{u\in \mathcal{L}_t:\text{deg}(u,t)\geq h\}\]
and, for $2\leq i\leq \left\lceil |\mathcal{L}_t|/2\right\rceil$,
\[U_i\coloneqq \min\{u\in \mathcal{L}_t\setminus \{U_1,\dots,U_{i-1}\}:\text{deg}(u,t)\geq h\}.\]
Then the probability in (\ref{interstep}) is at least
\begin{multline}\label{interstep2}
    \mathbb{P}\left(\{U_i,U_j\}\in E(\mathbb{G}_{2t}) \text{ }\forall 1\leq i,j\leq \left\lceil |\mathcal{L}_t|/2\right\rceil \text{ with }i\neq j, |\mathcal{L}_t|\geq \frac{L(t)}{2}, X_t\leq \frac{|\mathcal{L}_t|}{2}-1\right)\\
    \geq \mathbb{P}\left(\{U_i,U_j\}\in E(\mathbb{G}_{2t}) \text{ }\forall 1\leq i,j\leq \left\lceil |\mathcal{L}_t|/2\right\rceil \text{ with }i\neq j, |\mathcal{L}_t|\geq \frac{L(t)}{2}, X_t\leq \frac{L(t)}{8}\right),
\end{multline}
where the last inequality is due to the fact that, on the event $\{|\mathcal{L}_t|\geq L(t)/2\}$, we have $|\mathcal{L}_t|\gg 1$ and so 
\[\frac{|\mathcal{L}_t|}{2}-1\geq \frac{|\mathcal{L}_t|}{4}\geq \frac{L(t)}{8}.\]
The second probability in (\ref{interstep2}) is at least 
\begin{equation}\label{split}
    \mathbb{P}\left(\{U_i,U_j\}\in E(\mathbb{G}_{2t}) \text{ }\forall 1\leq i,j\leq \left\lceil |\mathcal{L}_t|/2\right\rceil \text{ with }i\neq j, X_t\leq \frac{L(t)}{8}\right)-\mathbb{P}\left(|\mathcal{L}_t|< \frac{L(t)}{2}\right).
\end{equation}
With the purpose of bounding the probability on the right-hand side of the above expression note that, since at each step in the construction of $\mathbb{G}_{2t}$ we add a vertex (independently of the previous steps) with probability $p$, we have $|\mathcal{L}_t|=_d\text{Bin}(T_2-T_1,p)$. Moreover, recalling  that $T_1\leq T_2/2$ by assumption, we get $(T_2-T_1)p\geq T_2p/2\geq L(t)$ and hence we see that $L(t)/2\leq p(T_2-T_1)/2$. Thus we can use Lemma \ref{concbinom} to write
\[
\mathbb{P}(|\mathcal{L}_t|<  L(t)/2) \leq \mathbb{P}(|\mathcal{L}_t|< p(T_2-T_1)/2)\leq \exp{(-p(T_2-T_1)/8)} \leq \exp\big(-\Omega(L(t))\big)=o(1).
\]
Since the event $\{X_t\leq L(t)/8\}$ belongs to $\mathcal{F}_t\coloneqq \sigma(\mathbb{G}_1,\dots,\mathbb{G}_{t})$, the sigma algebra generated by the graph process until time $t$, using the tower property of conditional expectation (and the previous estimate) we can bound from below the expression in (\ref{split}) by
\begin{equation}\label{towerprop}
    \mathbb{E}\left[\mathbb{1}\left\{ X_t\leq \frac{L(t)}{8}\right\}\mathbb{P}\left(\{U_i,U_j\}\in E(\mathbb{G}_{2t}) \text{ }\forall 1\leq i,j\leq \left\lceil |\mathcal{L}_t|/2\right\rceil \text{ with }i\neq j\mid \mathcal{F}_t\right)\right]-o(1).
\end{equation}
Note that for now we retain the event
\[\{ X_t\leq L(t)/8\}\subset \{ X_t\leq |\mathcal{L}_t|/2-1\}\]
because the $U_i$ are defined \textit{on} $\{X_t\leq |\mathcal{L}_t|/2-1\}$.
Recall that, by definition of $\mathcal{L}_t$, vertices $U_i$ and $U_j$ are born before time $T_2\ll t$. Thus $U_i$ and $U_j$ can be connected by an edge at time $s \geq t\gg   b(U_i) \vee b(U_j)$ if, and only if, at time $s$ an edge step occurs and both $U_i$ and $U_j$ are selected, and because
\[\text{deg}(U_i,t), \text{deg}(U_j,t) \geq h=C_2t^{1/2}\log^{1/2}(t),\]
we compute 
%\textcolor{red}{U: issue def of the $U_i$; they should be vertices already in the graph at time $t$!; then the above sentence should change to `...connected by an edge at time $s\geq t$...'}
\begin{align*}
    &\mathbb{P}\left(\{U_i,U_j\}\in E(\mathbb{G}_{2t})  \text{ }\forall 1\leq i,j\leq \left\lceil |\mathcal{L}_t|/2\right\rceil \text{ with }i\neq j\mid \mathcal{F}_t\right)\\ 
    &\geq 1-\sum_{i=1}^{\lceil |\mathcal{L}_t|/2\rceil}\sum_{j\neq i}^{\lceil |\mathcal{L}_t|/2\rceil}\mathbb{P}\Big(\{U_i,U_j\}\notin E(\mathbb{G}_{2t}) \text{ }\mid \mathcal{F}_t\Big)\\
    &\geq 1-\sum_{i=1}^{\lceil |\mathcal{L}_t|/2\rceil}\sum_{j\neq i}^{\lceil |\mathcal{L}_t|/2\rceil}\prod_{s=t+1}^{2t}\Big(1-2(1-p)\frac{\text{deg}(U_i,t)}{2s}\frac{\text{deg}(U_j,t)}{2s}\Big)\\
    &\geq 1-\sum_{i=1}^{\lceil |\mathcal{L}_t|/2\rceil}\sum_{j\neq i}^{\lceil |\mathcal{L}_t|/2\rceil}\prod_{s=t+1}^{2t}\Big(1-(1-p)\frac{C^2_2t\log(t)}{2s^2}\Big)\\
    &\geq 1-|\mathcal{L}_t|^2\exp\Big(-(1-p)\frac{C^2_2}{2}t\log(t)\sum_{s=t+1}^{2t}s^{-2}\Big),
\end{align*}
where for the last inequality we have used the standard bound $1+x\leq e^x$, which is valid for all $x\in \mathbb{R}$. Since
\[\sum_{s=t+1}^{2t}s^{-2}\geq \int_{t+1}^{2t+1}x^{-2} \geq \frac{1}{3t}\]
and $|\mathcal{L}_t|\leq T_2-T_1<T_2$ we obtain
\[1-|\mathcal{L}_t|^2\exp\Big(-(1-p)\frac{C^2_2}{2}t\log(t)\sum_{s=t+1}^{2t}s^{-2}\Big)\geq 1-T^2_2 \exp\Big(-(1-p)\frac{C^2_2}{6}\log(t)\Big)\]
and, for a large enough $C_2=C_2(p)>0$, %$>\sqrt{12/(2-p)}$ 
we see that
\[1-T^2_2 \exp\Big(-(1-p)\frac{C^2_2}{6}\log(t)\Big)=1-o(1).\]
Consequently, the expression in (\ref{towerprop}) is at least
\[(1-o(1))\mathbb{P}\left(X_t\leq \frac{L(t)}{8}\right)-o(1)=(1-o(1))\left[1-\mathbb{P}\left(X_t> \frac{L(t)}{8}\right)\right]-o(1).\]
Therefore, if we can show that
\begin{equation}\label{laststep}
   \mathbb{P}\left(X_t> \frac{L(t)}{8}\right)=o(1),
\end{equation}
then we conclude that (\ref{goal}) holds, and we are done. To establish (\ref{laststep}) we use Markov's inequality to bound
\[\mathbb{P}\left(X_t> \frac{L(t)}{8}\right)\leq \frac{8}{L(t)}\mathbb{E}[X_t].\]
Suppose that there exists $\delta_t=\delta_t(p)=o(1)$ such that 
\begin{equation}\label{mainbound}
    \mathbb{P}\big(\text{deg}(u,t)<h\mid b(u)\big)\leq \delta_t \quad \text{for every }u\in \mathcal{L}_t.
\end{equation}
Then
\begin{align*}
    \mathbb{E}[X_t]&=\sum_{u\in [2t]}^{}\mathbb{P}\big(\text{deg}(u,t)<h,u\in \mathcal{L}_t\big)\\
&=\sum_{u\in [2t]}^{}\mathbb{E}\Big[\mathbb{1}\{u\in \mathcal{L}_t\}\mathbb{P}\big(\text{deg}(u,t)<h\mid b(u)\big)\Big]\\
&\leq \delta_t\mathbb{E}[|\mathcal{L}_t|],
\end{align*}
where for the second equality, we have used the tower property and the fact that $u\in \mathcal{L}_t$ if, and only if, $b(u)$ is in $[T_1,T_2]\cap \mathbb{N}$ and the latter event is clearly fully determined by knowledge of $b(u)$. But then, since $\mathbb{E}[|\mathcal{L}_t|] = O(L(t))$, we get
\[\mathbb{P}\left(X_t> \frac{L(t)}{8}\right)\leq \frac{8}{L(t)}\mathbb{E}[X_t]=O(\delta_t)=o(1),\]
as desired. Therefore, in what follows, we focus on establishing (\ref{mainbound}). To this end, we first state the following elementary fact, whose proof is postponed to Section \ref{aux}. 

In order to simplify notation, we set 
\[\mathbb{P}_u(\cdot)\coloneqq \mathbb{P}(\cdot \mid b(u)),\]
the (conditional) probability given the birth time of $u$, where $u \in \mathcal{L}_t$ (so that $T_1 \leq b(u) \leq T_2 \ll t$).
\begin{lem}\label{upperbdeg}
    Given any $x\geq 4$ we have that
    \[\mathbb{P}_u\left(\text{deg}(u,t)\geq \left(\frac{t}{b(u)}\right)^{1-p/2}x\right)\leq \exp\left(-C_3\frac{x^2}{b(u)}\right),\]
    for some constant $C_3=C_3(p)>0$ which depends solely on $p$.
\end{lem}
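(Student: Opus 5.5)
We will prove the upper tail bound with an exponential supermartingale: build a rescaled degree process that is (essentially) a martingale with small increments, then invoke Lemma \ref{concmg}. Fix $u$ and condition on $b(u)=b$; from then on the degree evolves as a Markov chain driven by the independent step types. Write $D_s\coloneqq \text{deg}(u,s)$ for $s\ge b$. The total degree at time $s$ is exactly $2s$ (each step adds one edge, and the initial loop contributes $2$). Hence, given $\mathcal{F}_s$, the increment $D_{s+1}-D_s$ has the following law. With probability $p$ it equals $1$ with probability $D_s/(2s)$. With probability $1-p$ it equals the number of endpoints landing on $u$ (two independent selections, each hitting $u$ with probability $D_s/(2s)$). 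Therefore
\[
\mathbb{E}[D_{s+1}-D_s\mid \mathcal{F}_s]=\Big(p+2(1-p)\Big)\frac{D_s}{2s}=\frac{(2-p)D_s}{2s}=\Big(1-\frac p2\Big)\frac{D_s}{s}.
\]

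Set $c_s\coloneqq \prod_{r=b}^{s-1}\big(1+\frac{1-p/2}{r}\big)$. The standard estimate $c_s=\Theta\big((s/b)^{1-p/2}\big)$ follows by comparing $\sum_{r=b}^{s-1} \log(1+a/r)$ with $a\log(s/b)$, with constants depending only on $p$. Then $M_s\coloneqq D_s/c_s$ is a martingale for $s\ge b$. Its starting value is $M_b=D_b\le 2$, since a newborn vertex has degree $1$ (or $2$ if something else attached at the same step). Its increments satisfy
\[
|M_{s+1}-M_s|\le \frac{|D_{s+1}-D_s|+(c_{s+1}/c_s-1)D_s}{c_{s+1}}\le \frac{2+O(D_s/s)}{c_{s+1}}.
\]
Since $D_s\le 2s$, the $O(D_s/s)$ term is $O(1)$, so $|M_{s+1}-M_s|\le b_s\coloneqq K/c_{s+1}\le K'(b/s)^{1-p/2}$.

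The crucial computation is the sum of squared increments:
\[
\sum_{s\ge b} b_s^2\le K'^2 b^{2-p}\sum_{s\ge b}s^{-(2-p)}=O(b),
\]
because $2-p>1$ for $p<1$. (For $p=1$ the theorem is trivial, as there are no edge steps; so we may assume $p<1$.) Now the event $D_t\ge (t/b)^{1-p/2}x$ implies $M_t\ge x/C$ for some constant $C=C(p)$ coming from the comparison $c_t\le C(t/b)^{1-p/2}$. Since $x\ge 4$ and $M_b\le 2$, we may take $x\ge 4$ large relative to $2C$ after adjusting constants; if needed, we enlarge the threshold and absorb the difference into $C_3$. This gives $M_t-M_b\ge x/(2C)$. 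Lemma \ref{concmg} then yields
\[
\mathbb{P}_u(\cdots)\le \exp\big(-x^2/(8C^2\sum b_s^2)\big)\le \exp(-C_3x^2/b).
\]

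The main obstacle is the bookkeeping at the two ends. First, we must make sure $x\ge 4$ genuinely forces $M_t-M_b$ to be a constant multiple of $x$. This means comparing $c_t$ with $(t/b)^{1-p/2}$ carefully enough that the constant in front is at most about $2$. If that fails, we instead note that for $x$ in a bounded range the bound is trivial after decreasing $C_3$, since $x^2/b\le 16\cdot O(1)/b$ and $b\ge T_1\gg1$; but the right-hand side is then near $1$, so the claim holds trivially only if the probability bound $1$ suffices. Second, we must check that conditioning on $b(u)$ does not distort the transition probabilities after birth, which holds because the step types are i.i.d. and independent of the past.
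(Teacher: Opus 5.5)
Your route is the paper's route: the same normalized martingale $M_s=D_s/c_s$ with $c_s=\prod_{r=b}^{s-1}\bigl(1+\frac{1-p/2}{r}\bigr)$, increments of order $(b/s)^{1-p/2}$, $\sum_s b_s^2=O(b)$ because $2-p>1$, and Lemma \ref{concmg}. Your drift computation and your increment bound via $D_s\le 2s$ are fine. So is the remark that $\{b(u)=b\}\in\mathcal{F}_b$, so conditioning leaves later transitions unchanged.

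The genuine hole is the endpoint step. You flag it but leave it open, and the fallbacks you propose do not work. With only $M_b\le 2$ and an unspecified $C$ in $c_t\le C(t/b)^{1-p/2}$, the claimed inequality $M_t-M_b\ge x/(2C)$ needs $x\ge 4C$, i.e.\ $C\le 1$ at $x=4$. But $C\le 1$ is impossible for $t>b$: since $1+a/r\ge(1+1/r)^{a}$ for $a=1-p/2\in(0,1)$, one has $c_t\ge (t/b)^{1-p/2}$. You cannot enlarge the threshold, because the lemma is asserted for every $x\ge 4$. Nor is the range $x\in[4,4C]$ trivial, because $\exp(-C_3x^2/b)<1$.

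What closes the gap is explicit bookkeeping.
\begin{enumerate}
\item $D_b=1$ exactly. Each step performs a single operation, and the newborn attaches to an \emph{existing} vertex, so $M_b=1$. This is what the paper uses.
\item $\log(1+y)\le y$ together with $\sum_{r=b}^{t-1}r^{-1}\le b^{-1}+\log(t/b)$ gives $c_t\le e^{(1-p/2)/b}(t/b)^{1-p/2}\le e\,(t/b)^{1-p/2}$.
\end{enumerate}
Hence the event forces $M_t-M_b\ge x/e-1\ge x(1/e-1/4)\ge x/9$ for every $x\ge 4$. Lemma \ref{concmg}, with your bound $\sum_s b_s^2\le K(p)\,b$, then yields $\exp\bigl(-x^2/(162Kb)\bigr)$.

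Incidentally, the paper itself writes $c_t\le (t/b)^{1-p/2}$ via $\sum_{s=b}^{t-1}s^{-1}\le\log(t/b)$. That comparison runs the wrong way (the sum dominates the integral), and by the Bernoulli inequality above the claimed bound is in fact false for $t>b$. So your caution about the constant was warranted. The missing factor $e^{(1-p/2)/b}=1+o(1)$ is harmless, though, once one uses $M_b=1$ and $x\ge 4$.
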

With Lemma \ref{upperbdeg} at hand we can complete the proof of the theorem by establishing (\ref{mainbound}). The argument used here follows very closely the steps carried out in \cite{alves2021clustering}; however, since the relevant result of \cite{alves2021clustering} is stated for the \textit{sum} of the degrees of $m$ vertices, with $m$ a large enough constant, we preferred (for the sake of clarity and completeness) to include a complete proof, adapted to our current needs, rather than just stating the required modifications to the argument given in \cite{alves2021clustering}.
Recall that $u\in \mathcal{L}_t$ and so in particular $b(u)\geq T_1$.
Define
\[T^u_k\coloneqq \min\{i\geq 1:\text{deg}(u,i)=k\}, \text{ for }k\in \mathbb{N}.\]
In words, $T^u_k$ is just the first time at which the degree of vertex $u$ reaches $k$. Note that $T^u_k\geq b(u)+(k/2)$ almost surely. Indeed, at each step of the process the degree of $u$ can increase by at most $2$ (this occurs when there is an edge step and vertex $u$ is selected twice, thus forming a self-loop). Observe that $\text{deg}(u,t)<k$ if, and only if, $T^u_k>t$. Therefore, recalling that $h=C_2t^{1/2}\log^{1/2}(t)$ by definition, we see that
\begin{align}\label{firstiter}
    \nonumber\mathbb{P}_u\big(\text{deg}(u,t)< h\big)&\leq \mathbb{P}_u\big(\text{deg}(u,t)< \lceil h \rceil\big) \\
    \nonumber&=\mathbb{P}_u\big(T^u_{\lceil h \rceil}>t\big)\\
    &\leq \mathbb{P}_u\big(T^u_{\lceil h \rceil}>t,T^u_{\lceil h \rceil-1}\geq \lfloor g(h-1)\rfloor \big)+\mathbb{P}_u(T^u_{\lceil h \rceil-1}< \lfloor g(h-1)\rfloor),
\end{align}
where we set
\begin{equation}\label{defg}
    g(r)=g(r,b(u),p,\omega)\coloneqq \frac{r^{\frac{2}{2-p}}b(u)}{\omega} \vee \left(b(u)+\frac{r}{2}\right), \text{ for }1\leq r\leq \lceil h \rceil-1,
\end{equation}
with $\omega=\omega(t)\gg 1$ satisfying $\omega=O\left(\frac{b(u)}{\log(t)}\right)$. Note that since $b(u)\geq T_1$ (as $u \in \mathcal{L}_t$) and $T_1\gg \log(t)$, we can find such an $\omega$. The second term on the last inequality in \eqref{firstiter} is zero if 
\[\frac{(\lceil h \rceil-1)^{\frac{2}{2-p}}b(u)}{\omega}\leq b(u)+\frac{(\lceil h \rceil-1)}{2}\] 
exactly because $T^u_{h-1}\geq b(u)+(\lceil h \rceil-1)/2$.
Assuming this is not the case, we see that
\begin{align*}
\mathbb{P}_u(T^u_{\lceil h \rceil-1}< \lfloor g(h-1)\rfloor)&=\mathbb{P}_u(\text{deg}(u,\lfloor g(h-1)\rfloor)> \lceil h \rceil-1)
\\&=\mathbb{P}_u\left(\text{deg}\left(u,\left\lfloor (\lceil h \rceil-1)^{\frac{2}{2-p}}\frac{b(u)}{\omega}\right\rfloor\right)> \lceil h \rceil-1\right).
\end{align*}
Now setting $\ell -1 \coloneqq \left\lfloor (\lceil h \rceil-1)^{\frac{2}{2-p}}\frac{b(u)}{\omega}\right\rfloor$, wee see that:
\begin{itemize}
    \item if $(\lceil h \rceil-1)^{\frac{2}{2-p}}b(u)/\omega$ is an integer, then solving for $\lceil h \rceil-1$ we obtain
    \[\lceil h \rceil-1=\left((\ell -1)\omega/b(u)\right)^{1-p/2};\]
    \item if $(\lceil h \rceil-1)^{\frac{2}{2-p}}b(u)/\omega$ is not an integer, then solving for $\lceil h \rceil-1$ we obtain
    \[\lceil h \rceil-1\geq \Big((\ell -1)\omega/b(u)\Big)^{1-p/2}.\]
\end{itemize}
Therefore in either case we arrive at
\begin{align*}
    \mathbb{P}_u\big(\text{deg}(u,\lfloor (\lceil h \rceil-1)^{\frac{2}{2-p}}b(u)/\omega\rfloor)> \lceil h \rceil-1\big)&\leq \mathbb{P}_u\big(\text{deg}(u,\ell -1)>\Big((\ell -1)/b(u)\Big)^{1-p/2}\omega^{1-p/2}\Big)\\
    &\leq \exp\left(-C_3\frac{\omega^{2-p}}{b(u)}\right),
\end{align*}
where the last inequality is due to Lemma \ref{upperbdeg}. Thus, going back to (\ref{firstiter}), we obtain
\[\mathbb{P}_u\big(\text{deg}(u,t)< h\big)\leq \mathbb{P}_u\big(T^u_{\lceil h \rceil}>t,T^u_{\lceil h \rceil-1}\geq \lfloor g(h-1)\rfloor \big)+\exp\left(-C_3\frac{\omega^{2-p}}{b(u)}\right).\]
The first term on the right-hand side is equal to
\begin{equation}\label{sum}
    \sum_{s\geq \lfloor g(h-1)\rfloor}\mathbb{P}_u(T^u_{\lceil h \rceil}>t\mid T^u_{\lceil h \rceil-1}=s)\mathbb{P}_u(T^u_{\lceil h \rceil-1}=s).
\end{equation}
Using the definition of the model, setting
\[\Delta \text{deg}(u,t+1)\coloneqq  \text{deg}(u,t+1)-\text{deg}(u,t),\]
the increment of the degree of vertex $u$ at time $t$, we have that
\begin{align*}
\mathbb{P}_u(\Delta \text{deg}(u,t+1) \geq 1 | \mathcal{F}_{t}) &= p\frac{\text{deg}(u,t)}{2t} + (1-p)\frac{2\text{deg}(u,t)}{2t} - (1-p) \frac{\text{deg}(u,t)^2}{4t^2} \\
&= \left(1-\frac{p}{2}\right)\frac{\text{deg}(u,t)}{t} - (1-p)\frac{\text{deg}(u,t)^2}{4t^2}.
\end{align*}
From this we get
\begin{align*}
    \mathbb{P}_u(T^u_{\lceil h \rceil}>t\mid T^u_{\lceil h \rceil-1}=s)&\leq \prod_{r=s}^{t}\Big(1-\Big(\left(1-\frac{p}{2}\right)\frac{\lceil h \rceil-1}{r}-(1-p)\frac{(\lceil h \rceil-1)^2}{4r^2}\Big)\Big)\\
    &\leq \prod_{r=s}^{t}\Big(1-\Big(\left(1-\frac{p}{2}\right)\frac{\lceil h \rceil-1}{r}-(1-p)\frac{(\lceil h \rceil-1)^2}{4r\lfloor g(h-1)\rfloor}\Big)\Big)\\
    &=\prod_{r=s}^{t}\Big[1-\Big(\left(1-\frac{p}{2}\right)\frac{\lceil h \rceil-1}{r}\Big(1-\frac{(1-p)(\lceil h \rceil-1)}{2(2-p)\lfloor g(h-1)\rfloor}\Big)\Big)\Big],
\end{align*}
where for the second inequality we have used that $r\geq s\geq \lfloor g(h-1)\rfloor$ in the sum (\ref{sum}). Setting 
\begin{equation}\label{defdelta}
    \delta_p(r)=\delta_p(r,u,\omega)\coloneqq \frac{(1-p)r}{2(2-p)\lfloor g(r)\rfloor} \text{ for }1\leq r\leq \lceil h \rceil-1,
\end{equation}
and using again $(1+x) \leq e^x$, and that for $r$ large enough holds that $e^{-\frac{a}{r}} \leq \big(\frac{r}{r+1}\big)^{a}$ we have:
\begin{align*}
    1-\left(1-\frac{p}{2}\right)\frac{\lceil h \rceil-1}{r}\big(1-\delta_p(\lceil h \rceil-1)\big)&\leq \big(e^{\frac{1}{r}}\big)^{-\left(1-\frac{p}{2}\right)(\lceil h \rceil-1)(1-\delta_p(\lceil h \rceil-1))}\\&
    \leq \Big(\frac{r}{r+1}\Big)^{\left(1-\frac{p}{2}\right)(\lceil h \rceil-1)(1-\delta_p(\lceil h \rceil-1))},
\end{align*}
we arrive at
\begin{align} \label{controlwithexp}
    \nonumber
    \mathbb{P}_u(T^u_{\lceil h \rceil}>t\mid T^u_{\lceil h \rceil-1}=s)&\leq \prod_{r=s}^{t}\Big(\frac{r}{r+1}\Big)^{\left(1-\frac{p}{2}\right)(\lceil h \rceil-1)(1-\delta_p(\lceil h \rceil-1))}\\
    &\leq \Big(\frac{s}{t}\Big)^{\left(1-\frac{p}{2}\right)(\lceil h \rceil-1)(1-\delta_p(\lceil h \rceil-1))}.
\end{align}
Let $\eta_{h-1} =_d \text{Exp}\left(\left(1-\frac{p}{2}\right)\left(\lceil h \rceil-1\right)\left(1-\delta_p\left(\lceil h \rceil-1\right)\right)\right)$, independent of everything else. Then it follows from \eqref{controlwithexp}
\[\mathbb{P}_u\left(T^u_{\lceil h \rceil}>t\mid T^u_{\lceil h \rceil-1}=s\right)\leq \mathbb{P}\left(\exp(\eta_{\lceil h \rceil-1})>\frac{t}{s}\right)=\mathbb{P}_u\left(\exp(\eta_{\lceil h \rceil-1})>\frac{t}{s}\right),\]
where the last equality due to the independence of $\eta_{\lceil h \rceil-1}$ and the random graph. Therefore the sum in (\ref{sum}) is at most
\[\sum_{s\geq \lfloor g(h-1)\rfloor}\mathbb{P}_u\left(\exp(\eta_{\lceil h \rceil-1})>\frac{t}{s}\right)\mathbb{P}_u(T^u_{\lceil h \rceil-1}=s)\leq \mathbb{P}_u\big(T^u_{\lceil h \rceil-1}\exp(\eta_{h-1})>t).\]  
We now repeat the argument in \eqref{firstiter} to get
\begin{align*}
    \mathbb{P}_u(T^u_{\lceil h\rceil-1} \exp{(\eta_{\lceil h \rceil -1})} > t) \leq &\mathbb{P}_u(T^u_{\lceil h\rceil-1} \exp{(\eta_{\lceil h \rceil -1})} > t, T^u_{\lceil h \rceil -2} \geq \lfloor g(h-2)\rfloor) \\&+ \mathbb{P}_u(T^u_{\lceil h \rceil-2} <\lfloor g(h-2) \rfloor)
\end{align*}
and so by repeating the same calculation, we have that
\[\mathbb{P}_u\big(T^u_{\lceil h \rceil-1}\exp(\eta_{h-1})>t) \leq \mathbb{P}_u\big(T^u_{\lceil h \rceil-1}\exp(\eta_{\lceil h \rceil-1}+\eta_{\lceil h \rceil -2})>t\big)+\exp\left(-C_3\frac{\omega^{2-p}}{b(u)}\right).\]
Iterating we arrive at
\begin{equation}\label{gobackhere}
    \mathbb{P}_u\big(T^u_{\lceil h \rceil}>t\big)\leq \mathbb{P}_u\left(T^u_{1}\exp\left(\sum_{r=1}^{\lceil h \rceil -1}\eta_r\right)>t\right)+h\exp\left(-C_3\frac{\omega^{2-p}}{b(u)}\right),
\end{equation}
where each $\eta_{r} =_d \text{Exp}\left(\left(1-\frac{p}{2}\right)r(1-\delta_p(r))\right)$, which we take to be independent of $\eta_i$ for $i\neq r$ (and of the random graph).
Clearly $T^u_1=b(u)$ and hence 
\begin{align*}
    \mathbb{P}_u\left(T^u_{1}\exp\left(\sum_{r=1}^{\lceil h \rceil -1}\eta_r\right)>t\right)=\mathbb{P}_u\left(\exp\left(\sum_{r=1}^{\lceil h \rceil -1}\eta_r\right)>\frac{t}{b(u)}\right)&\leq \frac{b(u)^{\lambda}}{t^{\lambda}}\mathbb{E}\left[\exp\left(\lambda \sum_{r=1}^{\lceil h \rceil -1}\eta_r\right)\right]\\
    &=\frac{b(u)^{\lambda}}{t^{\lambda}}\prod_{r=1}^{\lceil h \rceil -1}\mathbb{E}\big[e^{\lambda \eta_r}\big],
\end{align*}
for every $\lambda>0$, where we have used Markov's inequality and the fact that the $\eta_r$ are independent of one another (and of the random graph). Now, in order to obtain a meaningful expression on the right-hand side of the last inequality, we need $\lambda$ to be smaller than each parameter of the $\eta_r$. Since for $1\leq r\leq \lceil h \rceil-1$ we have that
\[\left(1-\frac{p}{2}\right)r(1-\delta_p(r))\geq \left(1-\frac{p}{2}\right)(1-\delta_p(r))\geq \left(1-\frac{p}{2}\right)(1-\delta^+_p),\]
with $\delta^+_p\coloneqq \max\{\delta_p(r):1\leq r\leq \lceil h \rceil -1\}$, we take $\lambda\in \left(0,\left(1-\frac{p}{2}\right)(1-\delta^+_p)\right)$. Using (\ref{defg}) and (\ref{defdelta}) we further observe that (as $\omega\ll b(u)$ by hypothesis) for each $1\leq r\leq \lceil h \rceil -1$
\begin{equation}\label{maxdelta}
    \delta_p(r)\leq  \frac{(1-p)r}{2(2-p)\left\lfloor r^{\frac{2}{2-p}}\frac{b(u)}{\omega}\right\rfloor}\leq \frac{(1-p)\omega}{(2-p) r^{\frac{p}{2-p}}b(u)}\leq \frac{(1-p)\omega}{(2-p) b(u)}\ll 1,
\end{equation}
whence $\delta^+_p\ll 1$ too. We now write
\[\prod_{r=1}^{\lceil h \rceil -1}\mathbb{E}\big[e^{\lambda \eta_r}\big]=\prod_{r=1}^{\lceil h \rceil -1}\left(1-\frac{\lambda}{\left(1-\frac{p}{2}\right)r(1-\delta_p(r))}\right)^{-1}.\]
Since it's possible to show that $\log(1-y)>-y-y^2$ for $0<y<0.69$, by taking a small enough $\lambda$ (e.g. $\lambda<0.5$ suffices) we see that
\begin{align*}
    \left(1-\frac{\lambda}{\left(1-\frac{p}{2}\right)r(1-\delta_p(r))}\right)^{-1}&=\exp\left(-\log\left(1-\frac{\lambda}{\left(1-\frac{p}{2}\right)r(1-\delta_p(r))}\right)\right)\\
    &\leq \exp\left(\frac{\lambda}{\left(1-\frac{p}{2}\right)r(1-\delta_p(r))}+\frac{\lambda^2}{\left[\left(1-\frac{p}{2}\right)r(1-\delta_p(r))\right]^2}\right)
\end{align*}
so that then
\begin{align*}
    \prod_{r=1}^{\lceil h \rceil -1}\mathbb{E}\big[e^{\lambda \eta_r}\big]\leq C(\lambda,p)\exp\left(\frac{\lambda}{1-\frac{p}{2}}\frac{1}{1-\delta^+_p}\sum_{r=1}^{\lceil h \rceil -1}r^{-1}\right)&\leq C(\lambda,p)\exp\left(\frac{\lambda}{1-\frac{p}{2}}\frac{\log(\lceil h \rceil )}{1-\delta^+_p}\right)\\
    &=C(\lambda,p)\lceil h \rceil ^{\frac{\lambda}{1-\frac{p}{2}}(1-\delta^+_p)^{-1}},
\end{align*}
Whence
\[\frac{b(u)^{\lambda}}{t^{\lambda}}\prod_{r=1}^{\lceil h \rceil -1}\mathbb{E}\big[e^{\lambda \eta_r}\big]\leq C(\lambda,p)\frac{b(u)^{\lambda}}{t^{\lambda}}\lceil h \rceil ^{\frac{\lambda}{1-\frac{p}{2}}(1-\delta^+_p)^{-1}}.\]
Using the definition of $T_2$ together with the fact that $\lceil h \rceil=O(t^{1/2}\log(t))$ (where the constant in the $O(\cdot)$ notation depends solely on $p$), we see that the expression on the right-hand side of the last inequality is at most a constant (depending on $p$) times
\begin{equation}\label{mainexpr}
    t^{-\lambda \big(1-\frac{1-p}{2-p}\big)}\big(\log(t)\big)^{-\lambda\big(\frac{1}{2-p}\big)} f(t)^{-\lambda}\cdot t^{\frac{\lambda}{2-p}(1-\delta^+_p)^{-1}}\big(\log(t)\big)^{\frac{\lambda}{2-p}(1-\delta^+_p)^{-1}}.
\end{equation}
Since $\delta^+_p\ll 1$, we can write
\[(1-\delta^+_p)^{-1}=1+O(\delta^+_p)\]
and hence (for large enough $t$)
\[t^{\frac{\lambda}{2-p}(1-\delta^+_p)^{-1}}\big(\log(t)\big)^{\frac{\lambda}{2-p}(1-\delta^+_p)^{-1}}\leq t^{\frac{\lambda}{2-p}}\big(\log(t)\big)^{\frac{\lambda}{2-p}}t^{O(\delta^+_p)}.\]
(The constant in the $O(\delta^+_p)$ notation now depends on $\lambda$ and $p$, but this is of no importance as both quantities are of constant order.) Hence the expression in (\ref{mainexpr}) is at most
\[f(t)^{-\lambda}t^{O(\delta^+_p)}.\]
Finally, we notice from (\ref{maxdelta}) that
\[t^{O(\delta^+_p)}=\exp\Big(O(\delta^+_p)\log(t)\Big)=\exp\left(O\left(\frac{\omega}{b(u)}\log(t)\right)\right)=O(1)\]
since $\omega=O\left(\frac{b(u)}{\log(t)}\right)$ by assumption. In summary, we have shown that
\[\mathbb{P}_u\left(T^u_{1}\exp\left(\sum_{r=1}^{\lceil h \rceil -1}\eta_r\right)>t\right)\leq C(\lambda,p)f(t)^{-\lambda}\]
for some constant $C(\lambda,p)>0$ which depends solely on $\lambda,p$. Thus, going back to (\ref{gobackhere}), we conclude that
\[\mathbb{P}_u\big(T^u_{\lceil h \rceil}>t\big)\leq C(\lambda,p)f(t)^{-\lambda}+h\exp\left(-C_3\frac{\omega^{2-p}}{b(u)}\right).\]
Taking e.g. $\omega \coloneqq (C_3b(u)\log{(t)})^{1/(2-p)}$ for a large enough $C_3 = C_3(p)>0$ (note that this choice respects the requirements $1 \ll w = O(b(u)/\log{(t)})$, as $b(u) \geq T_1$ and $T_1$ grows asymptotically faster than any power of $\log{t}$ by assumption), we can conclude that
\[\mathbb{P}_u\big(T^u_{\lceil h \rceil}>t\big)\leq 2C(\lambda,p)f(t)^{-\lambda}\eqqcolon \delta_t(p),\]
which then establishes (\ref{mainbound}) (upon fixing a value of $\lambda\in (0,1/2)$), since $f(t)$ diverges with $t$.
\subsection{Proof of Lemma \ref{upperbdeg}}\label{aux}
There remains to establish an upper bound for the (conditional) probability that the degree of a given node $u$ is `too large', given its birth time $b(u)$.
Recall that we are taking $u \in \mathcal{L}_t$, which means that $T_1 \leq b(u) \leq T_2\ll t$.
\begin{proof}[Proof of Lemma \ref{upperbdeg}]
The proof is quite similar as the one given in \cite{alves2021clustering}, which in turn uses a standard argument based on Azuma-Hoeffding inequality stated in Lemma \ref{concmg}. First of all we note that, under $\mathbb{P}_u$, the process
\[M_u(t)\coloneqq \frac{\text{deg}(u,t)}{\prod_{s=b(u)}^{t-1}\Big(1+\frac{1-p/2}{s}\Big)}, \quad t\geq b(u),\]
defines a martingale started at $1$ (we use the convention that empty products equal $1$). Using that $\log(1+x)\leq x$ for every $x>-1$ and 
\[\sum_{s=b(u)}^{t-1}s^{-1}\leq \int_{b(u)}^{t}x^{-1}dx=\log\left(\frac{t}{b(u)}\right),\]
we get
\[\prod_{s=b(u)}^{t-1}\left(1+\frac{1-p/2}{s}\right)=\exp\left(\sum_{s=b(u)}^{t-1}\log\left(1+\frac{1-p/2}{s}\right)\right)\leq \left(\frac{t}{b(u)}\right)^{1-p/2}.\]
Hence we obtain
    \[\mathbb{P}_u\left(\text{deg}(u,t)\geq \left(\frac{t}{b(u)}\right)^{1-p/2}x\right)\leq \mathbb{P}_u\left(M_u(t)>x\right).\]
Next note that for $i>b(u)$ :
\begin{align*}
|M_u(i)-M_u(i-1)|&= \left| \frac{\text{deg}(u,i)}{\prod_{s=b(u)}^{i-1}\left(1+\frac{1-p/2}{s}\right)} - \frac{\text{deg}(u,i-1)}{\prod_{s=b(u)}^{i-2}\left(1+\frac{1-p/2}{s}\right)} \right|\\ &= \frac{1}{\prod_{s=b(u)}^{i-1}\left(1+\frac{1-p/2}{s}\right)} \left|\text{deg}(u,i)-\text{deg}(u,i-1)\left(1+\frac{1-p/2}{i-1}\right)\right| \\&\leq \frac{3-p/2}{\prod_{s=b(u)}^{i-1}\left(1+\frac{1-p/2}{s}\right)}\leq \frac{6}{\left(i/b(u)\right)^{1-p/2}},
\end{align*}
where for the second last inequality we used that $\text{deg}(u,i) \leq \text{deg}(u,i-1)+2$ 
while for the last inequality we have used that when $t$ is large enough (as $\log(1+x)\geq x-x^2$ for $x\geq 0$)
\begin{align*}
    \prod_{s=b(u)}^{t-1}\left(1+\frac{1-p/2}{s}\right)&=\exp\left(\sum_{s=b(u)}^{t-1}\log\left(1+\frac{1-p/2}{s}\right)\right)\\
    &\geq \left(\frac{t}{b(u)}\right)^{1-p/2}(1-o(1))\\
    &\geq2^{-1}\left(\frac{t}{b(u)}\right)^{1-p/2}.
\end{align*}
Since $M_u(b(u))=1$ and $x\geq 4$, we can use Lemma \ref{concmg} to bound 
\[\mathbb{P}_u\left(M_u(t)>x\right)\leq \mathbb{P}_u\left(M_u(t)-M_u(b(u))>\frac{x}{2}\right)\leq \exp\left(-\frac{x^2/4}{2\sum_{s=b(u)}^{t}36/\left(s/b(u)\right)^{2-p}}\right).\]
Since 
\[2\sum_{s=b(u)}^{t-1}\frac{36}{\left(\frac{s}{b(u)}\right)^{2-p}}\leq 72 (1-p)^{-1} 2^{1-p}  b(u),\]
we arrive at 
    \[\mathbb{P}_u\left(\text{deg}(u,t)\geq \left(\frac{t}{b(u)}\right)^{1-\frac{p}{2}}x\right)\leq\exp\left(-\frac{2^{-(6-p)}(1-p)}{9}\frac{x^2}{b(u)}\right)\]
    thus completing the proof.
\end{proof}

\section*{Acknowledgement} Both authors thank Rongfeng Sun and the National University of Singapore for their financial support to CDA.

\bibliographystyle{unsrt}
\bibliography{bibliography}
\end{document}